\newtheorem{thm}{Theorem}
\newtheorem{lemma}[thm]{Lemma}
\newtheorem{remark}[thm]{Remark}
\newtheorem{example}[thm]{Example}
\begin{document}

\title[Terwilliger algebras of tournaments]{The Terwilliger algebras of doubly regular tournaments}

\author[A. Herman]{Allen Herman$^*$}
\address{Department of Mathematics and Statistics, University of Regina, Regina, Canada, S4S 0A2}
\email{Allen.Herman@uregina.ca}

\thanks{$^*$The author's research is supported by the Natural Sciences and Engineering Research Council of Canada. \\ The author has no conflicts of interest or competing interests to declare.   }
\date{\today}

\keywords{Association schemes, Terwilliger algebra, tournaments}

\subjclass[2020]{Primary 05E30; Secondary 16S99, 05C20, 05C50}

\begin{abstract}
The Terwilliger algebras of asymmetric association schemes of rank $3$, whose nonidentity relations correspond to doubly regular tournaments, are shown to have thin irreducible modules, and to always be of dimension $4k+9$ for some positive integer $k$.   It is determined that asymmetric rank $3$ association schemes of order up to $23$ are determined up to combinatorial isomorphism by the list of their complex Terwilliger algebras at each vertex, but this no longer true at order $27$.  To distinguish order $27$ asymmetric rank $3$ association schemes, it is shown using computer calculations that the list of rational Terwilliger algebras at each vertex will suffice.     
\end{abstract}

\maketitle

\section{Introduction}

The goal of this article is to study the Terwilliger algebras of asymmetric rank $3$ association schemes.  Terwilliger introduced these as subconstituent algebras for arbitrary finite association schemes in \cite{T92}, and in \cite{T93b} he gave descriptions of their irreducible modules for the main families of $P$- and $Q$-polynomial association schemes with the thin property, and identified other families of these that do not have the thin property.  Since then, calculations of the irreducible modules for Terwilliger algebras have been given for strongly regular graphs \cite{TY94}, some group case association schemes (\cite{Balmaceda-Oura94},\cite{BM95}, \cite{HO2019}, and \cite{Maleki2024}), Doob schemes \cite{Tanabe97}, bipartite $P$- and $Q$-polynomial association schemes \cite{Caughman99}, and a few other cases.  Apart from these, there are general formulae for the Terwilliger algebras of direct products and wreath product association schemes (\cite{BST2010}, \cite{HKM2011}, \cite{MX2016}), but there are very few calculations for the Terwilliger algebras of other families of association schemes, in particular for asymmetric (i.e. non-symmetric) association schemes.  As asymmetric rank $3$ association schemes are a fundamental building block for asymmetric association schemes, they are a natural place to start.   

The main results of this paper show the Terwilliger algebras of asymmetric rank $3$ association schemes have thin irreducible modules, and that their dimensions are always equal to $9$ plus a multiple of $4$ and bounded by a function of the order of the scheme.  We note aslo that the same conclusions hold, by similar arguments, for the symmetric rank $3$ schemes generated by conference graphs. 

We then consider the extent to which an association scheme is determined up to combinatorial isomorphism by its Terwilliger algebras, a question that is particularly interesting for asymmetric rank $3$ schemes because any two of these with the same order are cospectral.  As many asymmetric rank $3$ association schemes $(X,S)$ have intransitive automorphism groups, several non-isomorphic Terwilliger algebras over $\mathbb{C}$ can occur as $x$ runs over the set of vertices $X$.  It turns out that the list of complex Terwilliger algebras $\{T_x : x \in X \}$ distinguishes asymmetric rank $3$ association schemes of order up to $23$, but this invariant does not distinguish those of order $27$.  This motivates us to consider the associated list of rational Terwilliger algebras for these schemes.  Using GAP \cite{GAP4} and the Terwilliger algebra features of its {\tt AssociationSchemes} package \cite{ASpkg}, we have verified that asymmetric rank $3$ association schemes of order $27$ are determined up to combinatorial isomorphism by their lists of rational Terwilliger algebras $\{\mathbb{Q}T_x : x \in X \}$ up to $\mathbb{Q}$-algebra isomorphism.  In order to analyze the algebraic structure of the rational Terwilliger algebras, we first establish a general result for arbitrary Terwilliger algebras, which shows any simple component of $\mathbb{Q}T_x$ that is associated with a thin irreducible $T_x$-module $V$ has rational Schur index $1$, and is hence isomorphic to a full matrix ring over the field of character values associated with $V$.  

\section{Asymmetric Association Schemes of rank $3$}

Let $\mathcal{A} = \{A_0=I_n, A_1, A_1^{\top} \}$ be the set of adjacency matrices of an asymmetric association scheme of rank $3$ and order $n$.  In particular, $A_1$ is the adjacency matrix of a total digraph on $n$ vertices satisfying $I_n + A_1 + A_1^{\top} = J_n$, where $A_1^{\top}$ denotes the transpose of $A_1$ and $J_n$ is the $n \times n$ matrix with every entry equal to $1$.   As is well known, this requires $A_1$ to be the usual adjacency matrix of a doubly regular tournament of order $n = 4u+3$, for some $u \ge 0$, and these matrices satisfy the identities 
\begin{equation}\label{eq1}
A_1A_1^{\top} = A_1^{\top}A_1= (2u+1)A_0 + u A_1 + u A_1^{\top},
\end{equation} 
and 
\begin{equation}\label{eq2}
A_1^2 = u A_1 + (u+1) A_1^{\top}.
\end{equation}
We record the following matrix equation for later use, which follows from the identities in (\ref{eq1}) and  (\ref{eq2}), and the fact that $I_n + A_1 + A_1^{\top} = J_n$:  if  $\alpha = \frac{-1+\sqrt{-n}}{2}$ and $\bar{\alpha}$ denotes its complex conjugate, then 
\begin{equation}\label{MatrixEq}
(A_1-\alpha I_n) (A_1 - \bar{\alpha} I_n) = (u+1)J_n. 
\end{equation}  

Regardless of whether or not they are realized by a collection of digraphs, asymmetric standard integral table algebras of rank $3$ exist for all $n = 4u+3$, $u \ge 0$.  This means that asymmetric rank $3$ association schemes are algebraically possible for all positive integers $n \equiv 3 \mod 4$.  The character table (with multiplicities) of such a table algebra takes the following form, where $\{b_0, b_1, b_1^*\}$ is the defining basis of the table algebra,  $\delta$ denotes the {\it valency} (or {\it degree}) character, and $\phi$ and $\bar{\phi}$ are a pair of complex conjugate irreducible characters:
$$\begin{array}{r|ccc|l} 
& b_0 & b_1 & b_1^* & \mbox{ multiplicity} \\ \hline
\delta & 1 & 2u+1 & 2u+1 & 1 \\
\phi & 1 & \alpha & \bar{\alpha} & 2u+1 \\
\bar{\phi} & 1 & \bar{\alpha} & \alpha & 2u+1 
\end{array}$$ 

Paley tournaments of this type are known to exist whenever $n$ is a prime congruent to $3$ mod $4$, these correspond to the fusion of the thin association scheme corresponding to the cyclic group whose non-trivial classes consist of squares and non-squares.  The number of non-isomorphic asymmetric rank $3$ schemes can be quite large, other than the Payley ones, they are almost always non-Schurian and often have trivial automorphism groups.  The number of asymmetric rank $3$ association schemes of each order $n \equiv 3 \mod 4$ for $n \le 31$ has been determined:  
$$\begin{array}{l|ccccccccc}
\mbox{ order }n & 3 & 7 & 11 & 15 & 19 & 23 & 27 & 31 & 35 \\ \hline
\mbox{number } & 1 & 1 & 1 & 1 & 2 & 19 & 374 & 98300 & ?
\end{array}$$  
The 98300 asymmetric rank $3$ association schemes were recently classified in \cite{HKMT-JCD2020}, they also report that the smallest $n \equiv 3 \mod 4$ for which such a scheme is not yet known to exist is $275$. 

\section{Terwilliger algbras over $\mathbb{C}$ and $\mathbb{Q}$} 

Let $\mathcal{A} = \{ A_0, A_1, \dots, A_d\}$ be the set of of adjacency matrices for an association scheme defined on the vertex set $X=\{1,\dots,n\}$.  For a fixed vertex $x \in X$, the set of the dual idempotents with respect to $x$ is the set  
$$\mathcal{E}^*(x) = \{E_0^*,E_1^*, \dots,E_d^*\}$$  
of $n \times n$ diagonal matrices defined as follows: for $1 \le u,v \le n$, 
\begin{equation}\label{eq4}
(E_i^*)_{uv} = \begin{cases}  1 & \mbox{ if } u=v \mbox{ and } (A_i)_{xu}=1 \\  0 & \mbox{ otherwise.} \end{cases}
\end{equation}
$\mathcal{E}^*(x)$ is a set of orthogonal and diagonal idempotents, whose ranks correspond to the valencies of the $A_i$, for $i=0,\dots,d$.  The {\it Terwilliger algebra} (a.k.a. {\it subconstitutent algebra}) of the association scheme with respect to a choice of vertex $x \in X$ is the subalgebra $T_x$ of $n \times n$ matrices over $\mathbb{C}$ generated by $\mathcal{A}$ and $\mathcal{E}^*(x)$.  This algebra is closed under the conjugate transpose, and is thus semisimple when considered as an algebra over $\mathbb{C}$.   We will also be interested in the rational Terwilliger algebra $\mathbb{Q}T_x$, which is the algebra generated by the sets of $n \times n$ integral matrices $\mathcal{A}$ and $\mathcal{E}^*(x)$ over the smaller field $\mathbb{Q}$.  The rational Terwilliger algebra $\mathbb{Q}T_x$ is a $\mathbb{Q}$-algebra with $T_x \simeq \mathbb{C} \otimes_{\mathbb{Q}} \mathbb{Q}T_x$, so its dimension over $\mathbb{Q}$ is the same as the dimension of $T_x$ over $\mathbb{C}$.  

The presence of the dual idempotents in the generating set of a Terwilliger algebra produces a natural refinement of its irreducible modules, a fact which has been exploited since its first introduction in \cite[\S 3]{T92}.  For every $E_i^* \in \mathcal{E}^*(x)$, $E_i^* T_x E_i^*$ is a subalgebra of $T_x$ whose multiplicative identity is $E_i^*$.  Given any irreducible $T_x$-module $V$, we have that 
$$V = \sum_{i=0}^d E_i^*V, $$ 
where each $E_i^*V$ is an $E_i^*T_xE_i^*$-module.  The {\it dual-diameter} of an irreducible $T_x$-module $V$ is the number of dual idempotents $E_i^*$ in $\mathcal{E}^*(x)$ for which $E_i^*V \ne 0$.  $V$ is said to be {\it thin} when all of these $E_i^*V$ are of dimension at most $1$.  

The {\it primary module} $V_0 = T_x e = T_x \hat{x}$ is the $T_x$-module generated by either the all $1$'s column vector $\mathbf{e}$ of length $n$, or the characteristic vector $\hat{x}$ of the vertex $x$.  The primary module gives an example of a thin irreducible $T_x$-module whose dimension $d+1$ is equal to its dual diameter \cite[Lemma 3.6]{T92}.  Note that the primary module is also realized over $\mathbb{Q}$, so it is an absolutely irreducible $\mathbb{Q}T_x$-module.  Since $E_0^* T_x E_0^*$ has dimension $1$, the other irreducible $T_x$-modules $V \ne V_0$ have $E_0^*V = 0$, so their dual-diameters will be less than $d+1$.  

If every irreducible $T_x$-module is thin, then we say that the Terwilliger algebra $T_x$ is thin.  Since the thinness of $T_x$ implies that every irreducible $E_i^*T_xE_i^*$-module has dimension $1$, this implies the components $E_i^*T_xE_i^*$ of the Pierce decomposition of $T_x$ with respect to $\mathcal{E}^*(x)$ are all commutative.  Conversely, commutativity of all the $E_i^*T_xE_i^*$ implies $T_x$ is thin \cite{T92}, so these notions are equivlaent.  This Pierce decomposition also makes sense for the rational Terwilliger algebra, so since $E_i^* (\mathbb{Q}T_x) E_i^* \subset E_i^* T_x E_i^*$, commutativity of these components over $\mathbb{C}$ implies they are also commutative over $\mathbb{Q}$.   

\section{The Subconstituent Decomposition of an asymmetric rank $3$ association scheme} 

Our approach to studying the irreducible modules of the Terwilliger algebras of asymmetric rank $3$ association schemes is similar to the approach used by Tomiyama and Yamizaki for the Terwilliger algebras of strongly regular graphs in \cite{TY94} that relied on subconstituent decompositions for these graphs introduced by Cameron, Goethals, and Seidel in \cite{CGS78}.   

Let $\mathcal{A} = \{ A_0, A_1, A_1^{\top} \}$ be the set of adjacency matrices for our asymmetric rank $3$ association scheme of order $n=4u+3$ on the set $X = \{1,\dots, n\}$.  By re-ordering the vertices, we may assume $x=1$ and the first row of $A_1$ has $1$'s in positions $2$ to $2u+2$.  Since $A_1$ is asymmetric, its' subconstituent decomposition takes this form: 
\begin{equation}\label{eq5}
A_1 = \sum_{i=0}^2 \sum_{j=0}^2 E_i^* A_1 E_j^* =  \begin{bmatrix} 0 & \mathbf{e}^{\top} & \mathbf{0}^{\top} \\ \mathbf{0} & B_1 & N \\ \mathbf{e} & J-N^{\top} & B_2  \end{bmatrix},
\end{equation} 
where $\mathbf{e}$ and $\mathbf{0}$ are the $(2u+1)$-column vectors of all $1$'s and all $0$'s respectively, $J$ is the $(2u+1) \times (2u+1)$ matrix of all $1$'s, and $B_1$, $B_2$, and $N$ are appropriate $(2u+1) \times (2u+1)$ $01$-matrices, with $B_1$ and $B_2$ asymmetric.  

Analyzing the implications of Equation (\ref{eq1}) and (\ref{eq2}) on the subconstituents of $A_1$ and $A_1^{\top}$, we obtain the following: 

$$ B_1 \mathbf{e} = u e, \mathbf{e}^{\top} B_1 = u \mathbf{e}^{\top}, $$ 
$$ N \mathbf{e} = (u+1) \mathbf{e}, \mathbf{e}^{\top}N=(u+1)\mathbf{e}^{\top}, $$ 
$$ B_2 \mathbf{e} = u \mathbf{e}, \mbox{ and } \mathbf{e}^{\top} B_2 = u \mathbf{e}^{\top}. $$ 
In particular, 
\begin{equation}\label{eq6} 
NJ = JN= (u+1)J, 
\end{equation} 
\begin{equation}\label{eq7} 
B_1J = JB_1 = uJ, \mbox{ and } 
\end{equation} 
\begin{equation}\label{eq8}
B_2 J =JB_2 = uJ.
\end{equation}
We can also see that 
$$ B_1^2+N(J-N^{\top}) = u B_1 + (u+1)B_1^{\top},  $$ 
$$ B_2^2+(J-N^{\top})N = u B_2 + (u+1)B_2^{\top}, $$
$$  B_1N + NB_2 = uN + (u+1)(J-N), \mbox{ and } $$ 
$$ J + (J-N^{\top})B_1 + B_2(J-N^{\top}) = u (J-N^{\top}) + (u+1) N^{\top}. $$
These imply 
\begin{equation}\label{eq9} B_1^2+B_1+(u+1)I = NN^{\top}, \end{equation} 
\begin{equation}\label{eq10} B_2^2+B_2+(u+1)I = N^{\top}N, \end{equation} 
\begin{equation}\label{eq11} N + B_1N + NB_2 = (u+1)J, \mbox{ and} \end{equation} 
\begin{equation}\label{eq12} B_2N^{\top} + N^{\top}B_1 + N^{\top} = (u+1)J. \end{equation}

\begin{lemma}\label{Bdiag}
$B_1^{\top} = J - I - B_1$ and $B_2^{\top} = J - I - B_2$ as $(2u+1) \times (2u+1)$ $01$-matrices.   In particular, $B_1$ and $B_2$ commute with their transposes, and are hence diagonalizable.  
\end{lemma}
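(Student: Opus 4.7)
The proof is essentially bookkeeping on the block decomposition, driven by the defining identity $I_n + A_1 + A_1^\top = J_n$. My plan is as follows.

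First, I would transpose the block decomposition (\ref{eq5}) of $A_1$ to obtain
\begin{equation*}
A_1^\top = \begin{bmatrix} 0 & \mathbf{0}^\top & \mathbf{e}^\top \\ \mathbf{0} & B_1^\top & J-N \\ \mathbf{e} & N^\top & B_2^\top \end{bmatrix},
\end{equation*}
where I have used $(J-N^\top)^\top = J - N$. Adding this to $A_1$ and to the corresponding block decomposition of $I_n$ (which is just $I$ in each of the three diagonal blocks and zeros elsewhere), the identity $I_n + A_1 + A_1^\top = J_n$ can be read off block by block. The off-diagonal blocks produce only tautologies (e.g. the $(2,3)$-block gives $N + (J-N) = J$), while the $(2,2)$- and $(3,3)$-blocks give exactly
\begin{equation*}
B_1 + B_1^\top + I = J \quad\text{and}\quad B_2 + B_2^\top + I = J,
\end{equation*}
which are the claimed formulas $B_1^\top = J - I - B_1$ and $B_2^\top = J - I - B_2$.

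For the second assertion, I would use the fact that $B_k$ commutes with $I$ trivially and with $J$ by equations (\ref{eq7}) and (\ref{eq8}), since $B_kJ = JB_k = uJ$ for $k=1,2$. Therefore $B_k$ commutes with the linear combination $J - I - B_k = B_k^\top$, so $B_k$ is normal over $\mathbb{C}$ and hence unitarily diagonalizable.

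There is essentially no obstacle here; the only step that requires a moment's care is keeping the transposed off-diagonal blocks $N \leftrightarrow J-N^\top$ straight when transposing (\ref{eq5}), and noticing that the hypotheses already assembled in (\ref{eq7}) and (\ref{eq8}) are precisely what is needed to convert the first conclusion into normality.
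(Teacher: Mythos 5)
Your proof is correct and follows essentially the same route as the paper: the block identities are read off from $I_n + A_1 + A_1^\top = J_n$ (equivalently $A_1^\top = J - I - A_1$), and normality of $B_1,B_2$ follows because each commutes with $J$ by (\ref{eq7}) and (\ref{eq8}), hence with $J - I - B_k = B_k^\top$. One cosmetic slip: in your displayed $A_1^\top$ the first block column should read $\mathbf{e}$ in the $(2,1)$ position and $\mathbf{0}$ in the $(3,1)$ position (the transpose swaps them from what you wrote), but this does not affect the diagonal blocks, which are all the argument uses.
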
 

\begin{proof} 
The first two identities are a consequence of the fact that $A_1^{\top} = J - I - A_1$ as $n \times n$ $01$-matrices.  The last two conclusions follow from Equations (\ref{eq5}) and (\ref{eq8}), which show $B_1$ and $B_2$ commute with $J$. 
 \end{proof} 

\begin{thm}\label{thin}
Let $(X,S)$ be an asymmetric rank $3$ association scheme, and let $T_x$ be its complex Terwilliger algebra with respect to a fixed vertex $x \in X$.  Then $T$ is thin. 
 
\end{thm}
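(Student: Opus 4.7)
My plan is to verify the equivalent condition recalled in Section 3 that each Pierce component $E_i^* T_x E_i^*$ for $i \in \{0, 1, 2\}$ is commutative. The case $i=0$ is immediate because $E_0^*$ has rank one, so $E_0^* T_x E_0^*$ is at most one-dimensional. Adopting the subconstituent block form of Section 4 with $x=1$, I identify $E_1^* T_x E_1^*$ and $E_2^* T_x E_2^*$ with subalgebras of $(2u+1) \times (2u+1)$ matrices realized in the $(2,2)$ and $(3,3)$ diagonal blocks. The main claim to be established is that $E_1^* T_x E_1^* = \mathbb{C}[B_1, B_1^\top]$ and, symmetrically, $E_2^* T_x E_2^* = \mathbb{C}[B_2, B_2^\top]$; commutativity then follows immediately from Lemma \ref{Bdiag}, which gives $B_i B_i^\top = B_i^\top B_i$.

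The inclusions $\supseteq$ are immediate from $E_i^* A_1 E_i^* = B_i$ and $E_i^* A_1^\top E_i^* = B_i^\top$. For the reverse inclusions I would induct on word length, reducing any generator $E_1^* A_{j_1} E_{k_1}^* \cdots E_{k_{m-1}}^* A_{j_m} E_1^*$ into $\mathbb{C}[B_1, B_1^\top]$. Absorbing $A_0 = I$ and splitting at any internal $E_1^*$ reduces the problem to \emph{irreducible} words with $j_i \in \{1,2\}$ and internal $k_i \in \{0, 2\}$. For walks visiting $E_0^*$, the key observation is that $E_1^* V_0 = \mathbb{C}\, A_1^\top \hat{x}$ is one-dimensional, so $E_1^* M \hat{x}$ is a scalar multiple of $A_1^\top \hat{x}$ for every $M \in T_x$. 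Applied at the position of the $E_0^*$-visit, and using the transpose-closure of $T_x$ to control the part of the word to the right of $E_0^*$, this forces the block-$(2,2)$ image of such a word to be a scalar multiple of $\mathbf{e}\mathbf{e}^\top = J$, which lies in $\mathbb{C}[B_1, B_1^\top]$ since $J = I + B_1 + B_1^\top$.

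For walks through $E_2^*$, an irreducible word factors as $X_1 Y X_2$ with $X_1 \in \{N, J-N\}$ in block $(2,3)$, $X_2 \in \{N^\top, J-N^\top\}$ in block $(3,2)$, and $Y$ a polynomial in $B_2, B_2^\top$ by the parallel inductive claim for $i=2$. The intertwining identities (\ref{eq11}) and (\ref{eq12}), rearranged as
\[
NB_2 = -(I+B_1)N + (u+1)J \quad\text{and}\quad B_2 N^\top = -N^\top(I+B_1) + (u+1)J,
\]
together with their transposes, allow every $B_2$- or $B_2^\top$-factor of $Y$ to be pushed across $X_1$ or $X_2$; each push leaves a twisted $N$-term plus a $\mathbb{C}J$-correction, and those corrections commute cleanly because $B_1 J = JB_1 = uJ$. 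After all pushes, the word reduces to a polynomial in $B_1, B_1^\top$ multiplied by one of $NN^\top$, $N^\top N$, $JN^\top$, $NJ$, or $J$, each of which collapses into $\mathbb{C}[B_1, B_1^\top]$ via (\ref{eq6}), (\ref{eq9}), (\ref{eq10}). The case $i=2$ is handled by the mirror argument with (\ref{eq10}) in place of (\ref{eq9}), the mutual induction being anchored at $m=1$. The main obstacle is the bookkeeping in these $E_2^*$-walks: each intertwining application grows a tail of $J$-corrections, and one must verify that after the terminal absorption of the rectangular factors via (\ref{eq9}) or (\ref{eq10}) the residue indeed lies inside the commutative algebra $\mathbb{C}[B_1, B_1^\top]$ (respectively $\mathbb{C}[B_2, B_2^\top]$).
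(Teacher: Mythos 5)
Your proposal is correct and follows essentially the same route as the paper: identify the corner algebras $E_1^*T_xE_1^*$ and $E_2^*T_xE_2^*$ with the commutative algebra generated by $I$, $J$, $B_i$ (your $\mathbb{C}[B_i,B_i^{\top}]$ is the same algebra, since $J=I+B_i+B_i^{\top}$), using Lemma \ref{Bdiag} and the subconstituent identities (\ref{eq6})--(\ref{eq12}), and then invoke Terwilliger's criterion that commutativity of all $E_i^*T_xE_i^*$ implies thinness. The only difference is one of detail: you carry out the word-length induction (and handle $E_0^*$-visits via the one-dimensionality of $E_1^*V_0$) explicitly, where the paper verifies the short words and asserts the pattern persists.
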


\begin{proof}
Suppose $(X,S)$ has order $n=4u+3$ for some $u \in \mathbb{N}$.  Assume without loss of generality that $x=1$ is the first vertex in $X$.  Let $\mathcal{A} = \{A_0, A_1, A_1^{\top} \}$ be the set of adjacency matrices of $(X,S)$ and let $\mathcal{E}^*(x) = \{E_0, E_1, E_2 \}$ be the set of dual idempotents corresponding to $x$.  Let $B_1$, $N$, and $B_2$ be the subconstituents of $A_1$ defined in Equation (\ref{eq5}).  

We claim that the corner subring $E_1^*T_xE_1^*$ is equal to the algebra generated by $I$, $B_1$, and $J$.  It is easy to see that $E_1^*T_xE_1^*$ will be generated as an algebra by certain words in the matrices $E_1^*$, $J_n$, $A_1$, and $E_2^*$, of increasing length: 
$$E_1^*, E_1^*J_nE_1^*, E_1^*A_1E_1^*, E_1^*A_1E_2^*A_1E_1^*, E_1^*A_1E_2^*A_1E_2^*A_1E_1^*, \mbox{ etc.} $$ 
Since $E_0^* = I_n - E_1^* - E_2^*$ and $A_1^{\top} = J_n - I_n - A_1$, it is only necessary to use words in the four matrices $E_1^*$, $A_1$, $J_n$, and $E_2^*$ that lie in $E_1^*T_xE_1^*$.  Under the natural projection to $(2u+1) \times (2u+1)$ matrices given by Equation (\ref{eq4}) that maps $E_1^*A_1E_1^*$ to $B_1$, the first three generators project to the $(2u+1) \times (2u+1)$ matrices $I$, $J$, and $B_1$.  We also have that $E_1^*A_1E_2^*A_1E_1^*$ projects to $N(J-N^{\top})$ and $E_1^*A_1E_2^*A_1E_2^*A_1E_1^*$ projects to $N B_2 (J-N^{\top})$.  Since 
$$N(J-N^{\top})=(u+1)J-B_1^2-B_1-(u+1)I)$$ 
and 
$$NB_2(J-N^{\top}) = -(u+1)J + B_1^3 +2B_1^2 +(u+2)B_1 + (u+1)I$$ 
by Equations (\ref{eq6}) through (\ref{eq12}), these will lie in the algebra generated by $I$, $J$, and $B_1$.  The same will be true of the projections of any generators of $E_1^*T_xE_1^*$ that is expressed using longer words in $\mathcal{E}^*$ and $\mathcal{A}$.  So $E_1^*T_xE_1^* \simeq \langle I, J, B_1 \rangle$.  Since $B_1$ commutes with $J$, this algebra is commutative.  A similar approach can be applied to show $E_2^*T_xE_2^* \simeq \langle I, J, B_2 \rangle$ and is commutative.   Since the three corner subalgebras $E_i^* T_x E_i^*$ for $i=0,1,2$ are all commutative, we can conclude from \cite{T92} that $T_x$ is thin.   
\end{proof}

\begin{thm}\label{main}
Let $(X,S)$ be an asymmetric rank $3$ association scheme of order $n = 4u+3$, for some $u \ge 0$.  Let $I_n = A_0, A_1, A_1^{\top}$ be its adjacency matrices.  Let $T_x$ be the complex Terwilliger algebra with respect to some $x \in X$.  Let $\{E_0^*, E_1^*, E_2^*\}$ be the set of dual idempotents with respect to $x$, and let $B_1$ be the subconstituent corresponding to $E_1^* A_1 E_1^*$.   

\begin{enumerate} 
\item Every non-primary irreducible $T_x$-module has dimension $1$ or $2$.

\item The number of distinct $1$-dimensional irreducible $T_x$-modules is either $0$ or $4$, and it is $4$ exactly when $\alpha = \frac{-1 + \sqrt{-n}}{2}$ is an eigenvalue of $B_1$. 

\item Let $d_{\alpha} \ge 0$ be the dimension of the $\alpha$-eigenspace of $B_1$.   Then the number of non-isomorphic $2$-dimensional irreducible $T_x$-modules is $m_2$, where $0 \le m_2 \le 2(u-d_{\alpha})$.  The dimension of $T_x$ is $9 + 4\epsilon + 4m_2$, where 
$$\epsilon = \begin{cases} 1 & \mbox{ if $T_x$ has a $1$-dimensional irreducible module} \\ 0 & \mbox{ otherwise} \end{cases}.$$ 
\end{enumerate}  
\end{thm}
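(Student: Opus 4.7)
The plan is to classify all irreducible $T_x$-modules by combining the thinness established in Theorem~\ref{thin} with the subconstituent form~(\ref{eq5}) of $A_1$ and the identities (\ref{eq6})--(\ref{eq12}). For part~(i), since $E_0^*T_xE_0^*$ is one-dimensional, the only irreducible $T_x$-module on which $E_0^*$ acts nontrivially is the primary module $V_0$, which has dimension $d+1=3$. Every other irreducible $V$ satisfies $E_0^*V=0$, so $V = E_1^*V \oplus E_2^*V$, and thinness forces each summand to have dimension at most $1$; hence $\dim V \le 2$.

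For part~(ii), I would first characterise the one-dimensional modules supported on $E_1^*$ (the $E_2^*$-case being symmetric). Writing $V = \langle v \rangle$ with $v = (0, v', \mathbf{0})^\top$, the conditions $T_x v \subseteq \langle v \rangle$ translate to $B_1 v' = \mu v'$, $\mathbf{e}^\top v' = 0$ (giving $E_0^*A_1 v = 0$), and $N^\top v' = 0$ (giving $E_2^*A_1 v = 0$, once $Jv' = 0$ is used). Substituting into~(\ref{eq9}) yields $(\mu^2 + \mu + u+1)v' = NN^\top v' = 0$, forcing $\mu \in \{\alpha, \bar\alpha\}$. The converse is automatic: any $\alpha$- or $\bar\alpha$-eigenvector of $B_1$ is orthogonal to $\mathbf{e}$ (since $B_1$ is normal by Lemma~\ref{Bdiag} and $\mathbf{e}$ is its $u$-eigenvector) and lies in $\ker N^\top$ (as $\|N^\top v'\|^2 = (v')^* NN^\top v' = 0$). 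This yields up to four candidate iso types parametrised by $(i,\mu) \in \{1,2\} \times \{\alpha, \bar\alpha\}$, distinguished by the scalar actions of $E_i^*$ and $A_1$. Since $B_1$ and $B_2$ are real, $\alpha$ and $\bar\alpha$ appear in each with equal multiplicity, reducing the count to $0$, $2$, or $4$; to rule out $2$, I would invoke $\dim\ker(NN^\top) = \dim\ker(N^\top N) = (2u+1) - \mathrm{rank}(N)$ together with the fact that each of these kernels is exactly the sum of the $\alpha$- and $\bar\alpha$-eigenspaces of $B_1$ (respectively $B_2$). This forces $d_\alpha(B_1) = d_\alpha(B_2)$ and yields the $0$-or-$4$ dichotomy.

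For part~(iii), a two-dimensional irreducible $V$ is spanned by $v_1 \in E_1^*V$ and $v_2 \in E_2^*V$ with truncations $v_1', v_2' \in \mathbb{C}^{2u+1}$. Invariance of $V$ under $A_1$ forces $B_1 v_1' = \mu_1 v_1'$, $B_2 v_2' = \mu_2 v_2'$, $N^\top v_1' \in \mathbb{C} v_2'$, and $N v_2' \in \mathbb{C} v_1'$. Applying~(\ref{eq12}) to $v_1'$ (using $Jv_1' = 0$, which follows from $v_1' \perp \mathbf{e}$) produces $B_2(N^\top v_1') = -(\mu_1 + 1)(N^\top v_1')$, which simultaneously identifies $v_2'$ with a nonzero scalar multiple of $N^\top v_1'$ and pins down $\mu_2 = -1 - \mu_1$. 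Non-splitting requires $N^\top v_1' \ne 0$, so $\mu_1 \notin \{\alpha, \bar\alpha\}$; the allowable $v_1'$ therefore live in the $(2u - 2d_\alpha)$-dimensional complement inside $\mathbf{e}^\perp$ of the $\alpha$- and $\bar\alpha$-eigenspaces of $B_1$. Since non-isomorphic two-dimensional modules have distinct $\mu_1$ (readable from the rank-one operator $E_1^*A_1E_1^*|_V$), the number of such $\mu_1$, and hence $m_2$, is at most $2(u - d_\alpha)$. The dimension formula then follows from Wedderburn: $\dim T_x = \sum_V(\dim V)^2 = 3^2 + (4\epsilon)\cdot 1^2 + m_2 \cdot 2^2 = 9 + 4\epsilon + 4m_2$.

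The main technical obstacle is the eigenvector identification via~(\ref{eq12}) that pairs each $B_1$-eigenvector $v_1'$ with the $B_2$-eigenvector $N^\top v_1'$ of eigenvalue $-1-\mu_1$; this is what reveals the internal structure of the two-dimensional modules and underlies the bound on $m_2$. The remainder is bookkeeping: assembling the normality of $B_1$ and $B_2$, the kernel identification $\dim \ker (NN^\top) = \dim \ker(N^\top N)$ for the 1-dimensional count, and the Wedderburn formula for $\dim T_x$.
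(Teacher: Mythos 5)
Your proposal is correct, and its overall skeleton matches the paper's proof: both arguments rest on the block form (\ref{eq5}), the identities (\ref{eq9})--(\ref{eq12}), Lemma \ref{Bdiag}, and Theorem \ref{thin}, with the key pairing $v_1' \mapsto N^{\top}v_1'$ sending a $B_1$-eigenvector of eigenvalue $\mu_1 \notin \{\alpha,\bar\alpha\}$ to a $B_2$-eigenvector of eigenvalue $-1-\mu_1$, and Wedderburn giving $\dim T_x = 9 + 4\epsilon + 4m_2$ (the paper leaves this last count implicit). The one place you genuinely diverge is the $0$-or-$4$ dichotomy in (ii): the paper deduces $d_{\alpha}(B_1)=d_{\alpha}(B_2)$ by decomposing the standard module and matching the dimensions that the $2$-dimensional constituents occupy in $E_1^*S$ and $E_2^*S$, whereas you argue directly that $\ker(NN^{\top})=\ker N^{\top}$ and $\ker(N^{\top}N)=\ker N$ have equal dimension $2u+1-\mathrm{rank}(N)$ and, via the factorizations $NN^{\top}=(B_1-\alpha I)(B_1-\bar\alpha I)$ and $N^{\top}N=(B_2-\alpha I)(B_2-\bar\alpha I)$ together with diagonalizability, are exactly the sums of the $\alpha$- and $\bar\alpha$-eigenspaces of $B_1$ and $B_2$ respectively; this is a clean, purely linear-algebraic route to the same equality and is arguably more self-contained than the module-counting in the paper. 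Also, your approach is ``top down'' (constraints on an arbitrary irreducible submodule of the standard module) rather than the paper's ``bottom up'' construction of $V_{1,\theta}$ and $V_{2,\phi}$ from each eigenvector, but the content is the same. One small point to make explicit in (iii): the orthogonality $v_1'\perp\mathbf{e}$ should be justified exactly as you did in (ii), namely $E_0^*A_1v_1\in E_0^*V=0$ forces $\mathbf{e}^{\top}v_1'=0$; as written you assert it without saying why, but this is a one-line fix, not a gap.
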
 

\begin{proof} (i) follows from Theorem \ref{thin}.  

It follows from Lemma \ref{Bdiag} that $B_1$ has a set of $2u+1$ linearly independent orthogonal eigenvectors.  From Equation (\ref{eq6}), one of these is $\mathbf{e}$.  Let $v$ be one of the $2u$ eigenvectors in this set that is orthogonal to $\mathbf{e}$, and suppose $\theta \in \mathbb{C}$ with $B_1 v = \theta v$.   Since $Jv$ will be $\mathbf{0}$, we will have 
$$A_1 \begin{bmatrix} 0 \\ v \\ \mathbf{0} \end{bmatrix} = 
\begin{bmatrix} 0 & \mathbf{e}^{\top} & \mathbf{0}^{\top} \\ \mathbf{0} & B_1 & N \\ \mathbf{e} & J-N^{\top} & B_2  \end{bmatrix}\begin{bmatrix} 0 \\ v \\ \mathbf{0} \end{bmatrix} = \begin{bmatrix} 0 \\ B_1 v \\ (J-N^{\top})v \end{bmatrix} =
\begin{bmatrix} 0 \\ \theta v \\ - N^{\top} v \end{bmatrix}. $$
From Equations (\ref{eq9}) and (\ref{eq12}), we obtain
$$\begin{array}{rcl} 
A_1 \begin{bmatrix}  0 \\ \mathbf{0} \\ -N^{\top}v \end{bmatrix} &=& \begin{bmatrix} 0 \\ -NN^{\top} v \\ - B_2N^{\top} v \end{bmatrix} \\
& & \\
&=& \begin{bmatrix} 0 \\ -(B_1^2+B_1+(u+1)I)v \\ ((u+1)J -N^{\top} - N^{\top}B_1) v \end{bmatrix} \\
& & \\
&=& \begin{bmatrix} 0 \\ -(\theta^2+\theta+(u+1))v \\ - (1 + \theta) N^{\top}v \end{bmatrix}.
\end{array}$$
Since these two vectors are fixed by elements of $\mathcal{E}^*$, we can see that $\begin{bmatrix} 0 \\ v \\ \mathbf{0} \end{bmatrix}$ and $\begin{bmatrix} 0 \\ \mathbf{0} \\ -N^{\top} v \end{bmatrix}$ generate an irreducible $T_x$-module.  Call this module $V_{1,\theta}$.  This module will have dimension $2$ exactly when $N^{\top}v \ne \mathbf{0}$.  Whenever $v$ is an eigenvector for $B_1$ orthogonal to $\mathbf{e}$ with eigenvalue $\theta$, Equations (\ref{eq9}) and (\ref{eq12}) tell us that 
$$B_2 N^{\top} v = (-1-\theta) N^{\top}v \mbox{ and } NN^{\top}v = (\theta^2 + \theta + (u+1))v.$$  
So $N^{\top}v = \mathbf{0}$ implies $\theta^2 + \theta + (u+1) = 0$, and hence $\theta \in \{ \alpha, \bar{\alpha} \}$.  When $\theta \not\in \{ \alpha, \bar{\alpha} \}$, we will have $NN^{\top}v \ne \mathbf{0}$, so $N^{\top}v \ne \mathbf{0}$, and so the module $V_{1,\theta}$ is $2$-dimensional. 

In a similar fashion, if we start with an eigenvector $w$ for $B_2$ with eigenvalue $\phi$ that is orthogonal to $\mathbf{e}$, we will have 
$$ A_1 \begin{bmatrix} 0 \\ \mathbf{0} \\ w \end{bmatrix} = \begin{bmatrix} 0 \\ Nw \\ B_2w \end{bmatrix} =
\begin{bmatrix} 0 \\ Nw \\ \phi w \end{bmatrix}, $$ 
so from Equations (\ref{eq10}) and (\ref{eq11}) we will get 
$$\begin{array}{rcl} 
A_1 \begin{bmatrix} 0 \\ Nw \\ \mathbf{0} \end{bmatrix} &=& \begin{bmatrix} 0 \\ B_1N w \\ (J-N^{\top})Nw \end{bmatrix} \\
& & \\
&=& \begin{bmatrix} 0 \\ ((u+1)J-N-N B_2) w \\ -N^{\top}Nw \end{bmatrix} \\
& & \\
&=& \begin{bmatrix} 0 \\ (-1-\phi) Nw \\ -(\phi^2+\phi+(u+1))w \end{bmatrix}.
\end{array} $$ 
So again we get an irreducible $T_x$-module $V_{2,\phi}$ generated by $\begin{bmatrix} 0 \\ \mathbf{0} \\ w \end{bmatrix}$ and $\begin{bmatrix} 0 \\ Nw \\ \mathbf{0}  \end{bmatrix}$, which will be $1$-dimensional when $\phi \in \{\alpha, \bar{\alpha} \}$ and otherwise $2$-dimensional.  In the $2$-dimensional case, $B_1Nw = (-1-\phi) w$, so we will have $V_{2,\phi} = V_{1,-1-\phi}$.  

Since $B_1$ is a matrix with rational entries, $\alpha$ will be an eigenvalue of $B_1$ if and only if $\bar{\alpha}$ is an eigenvalue of $B_1$, and the dimension $d_{\alpha}$ of the $\alpha$-eigenspace of $B_1$ will be equal to the dimension of the $\bar{\alpha}$-eigenspace of $B_1$.  It follows that, among the $2u$ eigenvectors of $B_1$ orthogonal to $\mathbf{e}$ we started with, $2u-2d_{\alpha}$ of them have eigenvalues $\theta$ that are not equal to $\alpha$ or $\bar{\alpha}$, and therefore their corresponding irreducible $T_x$-modules $V_{1,\theta} = V_{2,-1-\theta}$ will be $2$-dimensional.  (Note that $\bar{\alpha}=-1-\alpha$, so $\theta \not\in \{\alpha, \bar{\alpha} \}$ if and only if $-1-\theta \not\in \{ \alpha, \bar{\alpha} \}$.)  

Let $S$ be the standard $T_x$-module of dimension $n$.  Since 
$$V_{1,\theta} = E_1^*V_{1,\theta} \oplus E_2^*V_{1,\theta} = E_1^*V_{2,-1-\theta} \oplus E_2^*V_{2,-1-\theta},$$ 
with each of these components having dimension $1$, the sum of the dimensions in $E_1^*S$ and $E_2^*S$ that correspond to components of $2$-dimensional irreducible submodules of $S$ will be the same, and both of these will be $2u-2d_{\alpha}$.  This leaves a $2d_{\alpha}$-dimensional submodule of $E_2^*S$ that is filled with $1$-dimensional irreducible $T_x$-modules, and which projects to the direct sum of the $\alpha$- and $\bar{\alpha}$-eigenspaces of $B_2$.  The upshot of this is that whenever $\alpha$ is an eigenvalue of $B_1$, then $T_x$ has four $1$-dimensional irreducible modules $V_{1,\alpha}$, $V_{1,\bar{\alpha}}$, $V_{2,\alpha}$, and $V_{2,\bar{\alpha}}$, and if $\alpha$ is not an eigenvalue of $B_1$, then every non-primary irreducible $T_x$-module will be $2$-dimensional.   

Now, if $\beta: V_{1,\alpha} \rightarrow W$ is any non-zero $T_x$-module homomorphism, we would have $E_1^* \beta(v) = \beta (E_1^*v) \ne 0$ and $A_1 \beta(v) = \beta(A_1v) = \alpha \beta(v)$.  So $\beta(V_{1,\alpha})$ is a $1$-dimensional irreducible that is isomorphic to a component of $E_1^*S$, and $\beta(v)$ lies in the $\alpha$-eigenspace of $E_1^*A_1E_1^*$.  Therefore, $\beta(V_{1,\alpha}) \simeq V_{1,\alpha}$.  It follows then that the four $1$-dimensional irreducible $T_x$-modules $V_{1,\alpha}$, $V_{1,\bar{\alpha}}$, $V_{2,\alpha}$, and $V_{2,\bar{\alpha}}$ are pairwise non-isomorphic.  Since every irreducible $T_x$-module is isomorphic to a submodule of the standard $T_x$-module $S$, these are all of the $1$-dimensional irreducible $T_x$-modules up to isomorphism.  This proves (ii). 

By the preceding argument, the sum of the dimensions of the $2$-dimensional irreducible $T_x$-submodules of $S$ will be $4(u-d_{\alpha})$.  Since these submodules include representatives for all the $2$-dimensional irreducible $T_x$-modules, the number $m_2$ of isomorphism classes of these will be at most $2u-2d_{\alpha}$.  This proves (iii). 
\end{proof}

Continuing with the notation of the previous theorem, the next lemma shows we can determine the number of $2$-dimensional irreducible $T_x$-modules from the spectrum of $B_1$. 

\begin{lemma}\label{distinct}
Suppose $\theta_1 \ne \theta_2$ are eigenvalues of $B_1$ whose eigenvectors are orthogonal to $\mathbf{e}$.  If $\theta_1, \theta_2 \not\in \{ \alpha, \bar{\alpha} \}$ then the $2$-dimensional irreducible $T_x$-modules $V_{1,\theta_1}$ and $V_{1,\theta_2}$ are not isomorphic.   
\end{lemma}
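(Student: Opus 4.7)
The plan is to exploit the fact that the $T_x$-module structure of $V_{1,\theta}$ records the eigenvalue $\theta$ as the scalar by which the element $E_1^*A_1E_1^* \in T_x$ acts on the $1$-dimensional subspace $E_1^*V_{1,\theta}$. Since a module isomorphism is forced to commute with the action of every element of $T_x$, including the element $E_1^*A_1E_1^*$, and must carry $E_1^*V_{1,\theta_1}$ isomorphically onto $E_1^*V_{1,\theta_2}$, the two associated scalars must coincide.

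More concretely, first I would recall the explicit basis exhibited in the proof of Theorem \ref{main}: for $i=1,2$, pick an eigenvector $v_i$ of $B_1$ with $B_1v_i=\theta_iv_i$ and $\mathbf{e}^{\top}v_i=0$. Then the vectors
$$ \xi_i = \begin{bmatrix} 0 \\ v_i \\ \mathbf{0} \end{bmatrix}, \quad \eta_i = \begin{bmatrix} 0 \\ \mathbf{0} \\ -N^{\top}v_i \end{bmatrix} $$
form a basis of $V_{1,\theta_i}$ with $E_1^*\xi_i=\xi_i$, $E_2^*\eta_i=\eta_i$, $E_1^*\eta_i=E_2^*\xi_i=0$, and $V_{1,\theta_i}$ is $2$-dimensional precisely because $\theta_i\notin\{\alpha,\bar\alpha\}$ guarantees $N^{\top}v_i\ne\mathbf{0}$ (as noted in the theorem). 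In particular $E_1^*V_{1,\theta_i} = \mathbb{C}\xi_i$ is $1$-dimensional, and the element $E_1^*A_1E_1^*$ of $T_x$ acts on $\xi_i$ as multiplication by $\theta_i$, since $E_1^*A_1E_1^*\xi_i$ projects (via the identification used throughout the paper) to $B_1v_i=\theta_iv_i$.

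Suppose now, for contradiction, that $\beta: V_{1,\theta_1} \to V_{1,\theta_2}$ is a $T_x$-module isomorphism. Because $E_1^* \in T_x$, we have $\beta(E_1^*V_{1,\theta_1}) = E_1^*V_{1,\theta_2}$, which is $1$-dimensional, so $\beta(\xi_1) = c\,\xi_2$ for some nonzero scalar $c$. Applying the element $E_1^*A_1E_1^* \in T_x$ and using $T_x$-linearity of $\beta$ gives
$$ c\,\theta_2\,\xi_2 = (E_1^*A_1E_1^*)(c\xi_2) = (E_1^*A_1E_1^*)\beta(\xi_1) = \beta\bigl((E_1^*A_1E_1^*)\xi_1\bigr) = \beta(\theta_1\xi_1) = c\,\theta_1\,\xi_2. $$
Since $c\ne 0$ and $\xi_2\ne 0$, this forces $\theta_1 = \theta_2$, contradicting the hypothesis.

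There is no real obstacle here; the only point that requires care is verifying that the generators $\xi_i$ indeed span $E_1^*V_{1,\theta_i}$ (so that $E_1^*V_{1,\theta_i}$ is one-dimensional, making the scalar $\theta_i$ a well-defined invariant of the isomorphism class), which is immediate from the thinness established in Theorem \ref{thin} together with the explicit form of the two basis vectors exhibited in the proof of Theorem \ref{main}.
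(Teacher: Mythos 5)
Your proposal is correct and follows essentially the same route as the paper: both arguments use the fact that a $T_x$-module isomorphism commutes with $E_1^*$ and with $E_1^*A_1E_1^*$, so the eigenvalue $\theta$ by which $E_1^*A_1E_1^*$ acts on the one-dimensional space $E_1^*V_{1,\theta}$ is an invariant of the isomorphism class, forcing $\theta_1=\theta_2$. Your write-up is just a slightly more explicit, basis-level version of the paper's computation, so no further comment is needed.
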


\begin{proof} 
Suppose $\beta : V_{1,\theta_1} \rightarrow V_{1,\phi}$ is a $T_x$-module isomorphism between a pair of $2$-dimensional irreducible submodules of the standard module of $T_x$.  Then 
$$ (E_1^* A_1 E_1^*) \beta \bigg( \begin{bmatrix} 0 \\ v \\ 0 \end{bmatrix} \bigg) = 
\beta \bigg( E_1^*A_1E_1^* \begin{bmatrix} 0 \\ v \\ 0 \end{bmatrix} \bigg) = 
\beta \bigg( \theta \begin{bmatrix} 0 \\ v \\ 0 \end{bmatrix} \bigg) = 
\theta \beta \bigg( \begin{bmatrix} 0 \\ v \\ 0 \end{bmatrix} \bigg). $$ 
It follows that $\beta E_1^* \bigg( \begin{bmatrix} 0 \\ v \\ 0 \end{bmatrix} \bigg)$ is an eigenvector of $E_1^*A_1E_1^*$ with eigenvalue $\theta$.  Therefore, $\beta(V_{1,\theta})$ is a $2$-dimensional irreducible $T_x$-module corresponding to an element of the $\theta$-eigenspace of $B_1$, and so $\beta(V_{1,\theta}) \simeq V_{1,\theta}$. 
\end{proof} 

\begin{example} {\rm The above results tell us the structure of the complex Terwilliger algebra $T_x$ of an asymmetric rank $3$ association scheme at a given vertex $x$ determined by the spectrum of $B_1$, and hence by $E_1^*A_1E_1^*$.  $B_1$ has $2u$ eigenvalues other than $u$.  If all $2u$ of these are distinct, none are repeated, and none are equal to $\alpha$, then $\dim(T_x)$ will be the maximum possible, $9+4(2u)$, and otherwise it will be smaller.  Computationally the structure of the complex Terwilliger algebra can be deduced from the degrees of the distinct nonzero irreducible factors of the minimal polynomial of $E_1^*A_1E_1^*$, and the irreducible decomposition of the standard module can be deduced from the irreducible factorization of the characteristic polynomial of $E_1^*A_1E_1^*$.  For small $n$ the dimensions of these Terwilliger algebras $T_x$, as $x$ runs over all vertices, is as follows: 

{\footnotesize
$$\begin{array}{rl|rl}
\mbox{ AS } & \mbox{ $\dim(T_x)$, with frequency } & \mbox{ AS } & \mbox{ $\dim(T_x)$, with frequency }  \\ \hline 
\mbox{ {\tt as7no2} } & (17^7)  & & \\
\mbox{ {\tt as11no2} } & (25^{11}) & & \\ 
\mbox{ {\tt as15no5} } & (33^7,17^7) & & \\ 
\mbox{ {\tt as19no2} } & (41^{12},25^7) & \mbox{ {\tt as19no3} } & (41^{19}) \\ 
\mbox{ {\tt as23no2} } & (49^9,45^1,41^3,33^9,17^1) & \mbox{ {\tt as23no3} } & (49^{10},45^1,41^2,33^9,17^1) \\ 
\mbox{ {\tt as23no4} } & (49^8,41^5,33^9,17^1) & \mbox{ {\tt as23no5} } & (49^8,45^2,41^3,33^{10}) \\
\mbox{ {\tt as23no6} } & (49^{12},41^2,33^7,17^2) & \mbox{ {\tt as23no7} } & (49^7,45^1,41^5,33^9,17^1) \\ 
\mbox{ {\tt as23no8} } & (49^9,45^6,41^1,33^7) & \mbox{ {\tt as23no9} } & (49^7,45^5,41^1,33^9,17^1) \\ 
\mbox{ {\tt as23no10} } & (49^8,45^7,41^2,33^5,17^1) & \mbox{ {\tt as23no11} } & (49^{11},45^5,33^6,17^1) \\
\mbox{ {\tt as23no12} } & (49^{11},45^5,41^1,33^6) & \mbox{ {\tt as23no13} } & (49^{5},45^2,41^5,33^{10},17^1) \\
\mbox{ {\tt as23no14} } & (49^{12},45^4,41^1,33^{6}) & \mbox{ {\tt as23no15} } & (49^{10},45^5,41^1,33^{7}) \\
\mbox{ {\tt as23no16} } & (49^{10},45^6,33^{6},17^1) & \mbox{ {\tt as23no17} } & (49^{7},45^6,41^2,33^{8}) \\
\mbox{ {\tt as23no18} } & (49^{15},45^2,33^{6}) & \mbox{ {\tt as23no19} } & (49^{11},33^{11},17^1) \\
\mbox{ {\tt as23no20} } & (49^{23}) & \\ 
\end{array} $$
}
} 
\end{example}

In these small examples, the only Terwilliger algebras that have $1$-dimensional irreducible modules are the ones of dimension $45$ that occur for just a few vertices of some of the order $23$ schemes.  When $n=4u+3$, the maximum possible dimension $9+4(2u)$ occurs for at least one vertex in all of these small examples, and the minimum we see is $17=9+4(2)$.  Since $B_1$ is asymmetric nonnegative irreducible matrix, it should have at least three complex eigenvalues, one of them being a complex conjugate pair, so this is the smallest possible dimension that can occur.    

From this data we can see that asymmetric rank $3$ association schemes of order up to order $23$ are distinguished up to combinatorial isomorphism by their complex Terwilliger algebras.  For asymmetric rank $3$ schemes of order $27$ this is no longer the case.  In fact, not a single one of the $374$ combinatorial isomorphism classes of these schemes is determined by its list of complex Terwilliger algebras!  In the next section we will explain how to use the rational Terwilliger algebras to distinguish these schemes.

\begin{remark} {\rm A similar approach to Theorem \ref{main} gives an analogous conclusion for the dimensions of the Terwilliger algebras of symmetric rank $3$ association schemes generated by a conference graph.  This slightly strengthens the conclusions obtained for strongly regular graphs by Tomiyama and Yanazaki \cite{TY94} in the case of conference graphs. When the conference graph has order $n = 4u+1$, the character table of these association schemes has the form 
$$\begin{array}{r|ccc|l} 
& A_0 & A_1 & A_2 & \\ \hline
\delta & 1 & 2u & 2u & 1 \\
\psi & 1 & \beta & \beta^{\sigma} & 2u \\
\bar{\psi} & 1 & \beta^{\sigma} & \beta & 2u 
\end{array}$$ 
where $\beta = \frac{-1+\sqrt{4u+1}}{2}$ and $\beta^{\sigma}$ is its Galois conjugate.  The subconstituent decomposition of $A_1$ takes the form  
$$ A_1  = \begin{bmatrix} 0 & \mathbf{e}^{\top} & \mathbf{0}^{\top} \\ \mathbf{e} & B_1 & N \\ \mathbf{0} & N^{\top} & B_2  \end{bmatrix},$$
and working with the identity $A_1^2=(2u)I + (u-1)A_1 + uA_2$ we can deduce the identities 
$$ NN^{\top} = uJ - (B_1^2+B_1-uI) \mbox{ and } B_2N^{\top} = u J - N^{\top} - N^{\top} B_1. $$ 
From this, we can see that whenever $v$ is an eigenvector for $B_1$ that is orthogonal to $\mathbf{e}$ and has eigenvalue $\theta$, then $NN^{\top}v = -(\theta^2+\theta-u) v$ and $B_2N^{\top}v = (-1-\theta)v$.  As in the proof of Theorem \ref{main}, when $\theta \not\in \{ \beta, \beta^{\sigma} \}$, then $T_x$ will have a $2$-dimensional irreducible module generated by $\begin{bmatrix} 0 \\ v \\ 0 \end{bmatrix}$ and $\begin{bmatrix} 0 \\ N^{\top} v \end{bmatrix}$, and when $\theta \in \{\beta, \beta^{\sigma}\}$, then $\begin{bmatrix} 0 \\ v \\ 0 \end{bmatrix}$ generates a $1$-dimensional irreducible $T_x$-module.  As above, the number of non-isomorphic irreducible $T_x$-modules of each dimension is determined by the degrees of the distinct factors of the minimal polynomial of $B_1$.  

Since $B_1$ is an integral matrix, when it has the eigenvalue $\beta$, it also has the eigenvalue $\beta^{\sigma}=-1-\beta$.  Since the $2$-dimensional irreducible $T_x$-modules are also thin in this case, we can deduce that the number of $1$-dimensional irreducible $T_x$-submodules of the standard module that are generated from eigenvectors of $B_1$ is the same as that generated using eigenvectors of $B_2$.  So from this we can conclude the number of non-isomorphic $1$-dimensional irreducible $T_x$-modules is either $0$ or $4$.  So the dimension of $T_x$ is again always $9$ plus a multiple of $4$.  
}\end{remark} 

\section{ The Rational Terwilliger Algebra } 

In this section, we consider the rational Terwilliger algebra $\mathbb{Q}T_x$ of an association scheme $(X,S)$ of order $n$ with respect to a fixed vertex $x \in X$.  So $\mathbb{Q}T_x$ is the $\mathbb{Q}$-subalgebra of $M_n(\mathbb{Q})$ generated by the union $\mathcal{A} \cup \mathcal{E}^*(x)$, where $\mathcal{A} = \{A_0=I_n, A_1, \dots, A_d \}$ is the set of adjacency matrices of $(X,S)$ and $\mathcal{E}^*(x) = \{E_0^*,E_1^*, \dots, E_d^*\}$ is its set of dual idempotents with respect to $x$.  Since these are $01$-matrices, it is clear that these sets will generate an algebra with a basis whose structure constants are rational.  (Finding a basis of $T_x$ with integral structure constants turns out to be much more difficult, see \cite{Hanaki-ModularTerAlg}.) 

It is easy to see that $\mathbb{Q}T_x$ is a semisimple $\mathbb{Q}$-algebra.  If $\mathbb{Q}T_x$ were not semisimple, it would have a nonzero nilpotent ideal $I$, and $\mathbb{C} \otimes_\mathbb{Q} I$ would be a nonzero nilpotent ideal of $T_x$, which does not exist.  Because it is a finite-dimensional semisimple $\mathbb{Q}$-algebra, the algebraic structure of $\mathbb{Q}T_x$ is dependent upon the Galois conjugacy classes of irreducible $T_x$-modules, their fields of character values, and their associated Schur indices.  (For an overview of representation theory over non-algebraically closed fields of characteristic zero relevant to the discussion that follows, see \cite[\S 74]{CRvol2}).   

Since $T_x$ is semisimple $\mathbb{C}$-algebra with a rational basis, $\mathbb{Q}T_x$ has a splitting field $K \subset \mathbb{C}$ which is a finite Galois extension of $\mathbb{Q}$.  This splitting field $K$ satisfies  
$$ K \otimes_{\mathbb{Q}} \mathbb{Q}T_x = KT_x \simeq \displaystyle{\bigoplus_{\chi \in Irr(T_x)}} KT_x e_{\chi} \simeq \displaystyle{\bigoplus_{\chi \in Irr(T_x)}} M_{n_{\chi}}(K), $$
where $e_{\chi}$ is the centrally primitive idempotent of $T_x$ corresponding to each irreducible character $\chi \in Irr(T_x)$, and $n_{\chi}$ denotes the degree of $\chi$.   When we let $Gal(K/\mathbb{Q})$ act on $KT_x$, we obtain each centrally primitive idempotent $e_{\tilde{\chi}}$ of $\mathbb{Q}T_x$ as the sum over one Galois conjugate class of idempotents $e_{\chi}^{\sigma}$ as $\sigma$ runs over $Gal(K/\mathbb{Q})$.  Thus 
$$ \mathbb{Q}T_x \simeq \displaystyle{\bigoplus_{\tilde{\chi}}} \mathbb{Q}T_x e_{\tilde{\chi}}$$ 
as $\tilde{\chi}$ runs over the distinct Galois conjugacy classes of irreducible characters of $T_x$.  Each simple component $\mathbb{Q}T_x e_{\tilde{\chi}}$ is a central simple algebra whose dimension over its center is $n_{\chi}$.  This means $\mathbb{Q}T_x e_{\tilde{\chi}} \simeq M_r(D)$ where $D$ is a finite dimensional division algebra over $\mathbb{Q}$ satisfying $n_{\chi}^2 = r^2 [D:Z(D)]$. The dimension of $D$ over its center is the square of the {\it Schur index} of $D$.   Since $K$ has characteristic zero, the center of $\mathbb{Q}T_xe_{\tilde{\chi}}$ is $\mathbb{Q}$-isomorphic to the field of character values $\mathbb{Q}(\chi)$.  We remark that $\mathbb{Q}(\chi)/\mathbb{Q}$ need not be a normal extension, so the subfields of $K$ that are isomorphic to $\mathbb{Q}(\chi)$ are the images of $\mathbb{Q}(\chi)$ under elements of $Gal(K/\mathbb{Q})$, and so there will be $[\mathbb{Q}(\chi):\mathbb{Q}]$ of these embeddings $\mathbb{Q}(\chi) \hookrightarrow K$.  (Unlike what happens for rational adjacency algebras of association schemes, where it is an open question whether the splitting fields are always cyclotomic, we have found it to be quite common for the splitting fields of rational Terwilliger algebras to be noncyclotomic extensions of $\mathbb{Q}$.) 

If $W$ is an irreducible $\mathbb{Q}T_x$-module, then there will be a unique $e_{\tilde{\chi}}$ for which $e_{\tilde{\chi}}W \ne 0$, and the corresponding representation $\mathbb{Q}T_x \rightarrow End_{\mathbb{Q}}(W)$ has image isomorphic to $\mathbb{Q}T_xe_{\tilde{\chi}}$.  Conversely, given $\chi \in Irr(T_x)$, there is a unique centrally primitive idempotent $e_{\tilde{\chi}}$ of $\mathbb{Q}T_x$.  The simple algebra $\mathbb{Q}T_xe_{\tilde{\chi}}$ has just one irreducible module $W$ up to isomorphism, which lifts to an irreducible $\mathbb{Q}T_x$-module.  Tensoring this module with $K$ gives $KT_x$-module $K \otimes_{\mathbb{Q}} W$ that is isomorphic to a nonzero multiple $m'\tilde{V'}$ of $\tilde{V'}$, where $\tilde{V'}$ is the sum of the Galois conjugates of $V'$, an absolutely irreducible $KT_x$-module.  In this case $V = \mathbb{C} \otimes_K V'$ will be an irreducible $T_x$-module whose character is a Galois conjugate of $\chi$.   The multiplicity $m'$ is called the {\it rational Schur index} of the irreducible module $V$ (or of the irreducible character $\chi$); when $\mathbb{Q}T_x e_{\tilde{\chi}} \simeq M_r(D)$ above, then $m' = \sqrt{[D:\mathbb{Q}(\chi)]}$ agrees with the Schur index of the division algebra $D$.       

The goal of this section is to prove the rational Schur index associated with a thin $T_x$-module is $1$.  

\begin{thm}\label{ThinSchurIndex}
Suppose $V$ is a {\it thin} irreducible $T_x$-module whose character is $\chi \in Irr(T_x)$.  Then $\chi$ has rational Schur index equal to $1$.  In particular, the simple component of $\mathbb{Q}T_x$ determined by $V$ and $\chi$ will be isomorphic to $M_{d'}(\mathbb{Q}(\chi))$, where $d'$ is the dual diameter of $V$. 
\end{thm}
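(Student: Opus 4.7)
The plan is to construct $d'$ pairwise orthogonal nonzero idempotents inside the simple component of $\mathbb{Q}T_x$ associated with $\chi$, and then apply a rank count in $M_r(D)$ to force the Schur index to be $1$. Let $e_{\tilde{\chi}}$ denote the centrally primitive idempotent of $\mathbb{Q}T_x$ corresponding to the Galois conjugacy class of $\chi$, and write $\mathbb{Q}T_x e_{\tilde{\chi}} \cong M_r(D)$, where $D$ is a division algebra over its center $\mathbb{Q}(\chi)$ of Schur index $m'$, so that $rm' = n_{\chi}$. Because $V$ is thin, $n_{\chi} = \dim V$ equals the dual diameter $d'$, so $rm' = d'$. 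It then suffices to prove $m' = 1$, which forces $r = d'$ and $D \cong \mathbb{Q}(\chi)$, giving the identification $\mathbb{Q}T_x e_{\tilde{\chi}} \cong M_{d'}(\mathbb{Q}(\chi))$ asserted in the theorem.

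Let $I = \{i : E_i^* V \neq 0\}$, so $|I| = d'$. For each $i \in I$, set $\bar{E_i^*} := E_i^* e_{\tilde{\chi}}$. I would verify three properties of these elements inside $\mathbb{Q}T_x e_{\tilde{\chi}}$: they are idempotents (since $e_{\tilde{\chi}}$ is central and the $E_i^*$ are idempotent), they are pairwise orthogonal (inherited from $E_i^* E_j^* = 0$ for $i \neq j$), and each $\bar{E_i^*}$ is nonzero. The nonvanishing is the substantive claim: since $V$ occurs as a summand of the module on which $\mathbb{Q}T_x e_{\tilde{\chi}}$ acts faithfully after extension of scalars to $\mathbb{C}$, and since $E_i^*$ acts on $V$ with rank $\dim E_i^* V = 1$ for $i \in I$, the image of $E_i^*$ in $\mathbb{Q}T_x e_{\tilde{\chi}}$ must be nonzero.

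The final step is an elementary rank count inside $M_r(D)$: the images $\bar{E_i^*} D^r$ for $i \in I$ form an internal direct sum (by orthogonality) of nonzero right $D$-submodules of $D^r$, each of positive $D$-rank, so the number $d'$ of summands is at most $r$. Combined with $rm' = d'$, this yields $m' \leq 1$, hence $m' = 1$ and $r = d'$, completing the proof.

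The main obstacle I anticipate is the nonvanishing of the $\bar{E_i^*}$ in $\mathbb{Q}T_x e_{\tilde{\chi}}$, because this algebra captures the whole Galois orbit of $\chi$ rather than $\chi$ alone, and a priori $E_i^*$ might annihilate some Galois conjugate $V^\sigma$ even while preserving $V$. The rescue is the rationality of $E_i^*$: for every $\sigma \in \mathrm{Gal}(K/\mathbb{Q})$ we have $\chi^\sigma(E_i^*) = \sigma(\chi(E_i^*)) = \sigma(1) = 1$, so the dual support $I$ is Galois-invariant and $E_i^*$ acts with rank $1$ on each $V^\sigma$, which forces $\bar{E_i^*}$ to be nonzero in the simple component.
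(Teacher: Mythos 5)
Your proposal is correct, and it proves the theorem by a genuinely different mechanism than the paper. The paper fixes a single $i$ with $E_i^*V \neq 0$, observes that $E_i^*e_{\tilde{\chi}}$ is a nonzero idempotent of the simple algebra $\mathbb{Q}T_xe_{\tilde{\chi}} \simeq M_r(D)$, so the corner $E_i^*(\mathbb{Q}T_xe_{\tilde{\chi}})E_i^*$ is Morita equivalent to it and of the form $M_{r'}(D)$; thinness makes this corner commutative, hence a field isomorphic to $\mathbb{Q}(\chi)$, which forces $D = \mathbb{Q}(\chi)$ and Schur index $1$. You instead use thinness globally, via $n_\chi = \dim V = d'$, and count the $d'$ pairwise orthogonal nonzero idempotents $E_i^*e_{\tilde{\chi}}$ inside $M_r(D)$: their images in the column space $D^r$ form a direct sum of nonzero right $D$-subspaces, giving $d' \le r$, which together with $rm' = n_\chi = d'$ forces $m' = 1$ and $r = d'$. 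Your route is more elementary (no Morita theory, just faithfulness of $M_r(D)$ on $D^r$ and the reduced-degree identity $rm' = n_\chi$), and it pins down $r = d'$ and $m' = 1$ in one stroke; the paper's route needs thinness only at one dual idempotent in the support and yields the extra structural fact that each corner $E_i^*(\mathbb{Q}T_xe_{\tilde{\chi}})E_i^*$ is the field $\mathbb{Q}(\chi)$. Two small remarks: your nonvanishing step can be shortened, since $e_{\tilde{\chi}}$ acts as the identity on $V$ itself (the other Galois components of $e_{\tilde{\chi}}$ annihilate $V$), so $E_i^*e_{\tilde{\chi}}$ acts on $V$ exactly as $E_i^*$ does and is therefore nonzero without any appeal to the conjugate modules $V^{\sigma}$; your Galois-invariance observation $\chi^{\sigma}(E_i^*) = \sigma(\chi(E_i^*)) = 1$ is correct but not needed.
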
 

\begin{proof} 
Let $\mathbb{Q}T_x e_{\tilde{\chi}}$ be the simple component of $\mathbb{Q}T_x$ determined by the given irreducible character $\chi$ of $V$, and let $W$ be an irreducible $\mathbb{Q}T_x$-module for which $V$ is a constituent of $\mathbb{C} \otimes_{\mathbb{Q}} W$.    Let $E_i^* \in \mathcal{E}^*(x)$ be one of the dual idempotents for which $E_i^*V \ne 0$.  Since $E_i^*W = 0$ implies $E_i^* V = 0$, we must have that $E_i^*W \ne 0$.  Since $e_{\tilde{\chi}}$ acts as the identity on $W$, we have $E_i^* e_{\tilde{\chi}} W \ne 0$.  Therefore, $E_i^* (\mathbb{Q}T_x e_{\tilde{\chi}}) E_i^*$ is non-zero.  But $\mathbb{Q}T_x e_{\tilde{\chi}}$ is a finite-dimensional simple algebra, and $E_i^* e_{\tilde{\chi}}$ is a non-zero idempotent of it, so we have that $E_i^* (\mathbb{Q}T_x e_{\tilde{\chi}}) E_i^*$ is Morita equivalent to $\mathbb{Q}T_x e_{\tilde{\chi}}$.  Therefore, if $\mathbb{Q}T_x e_{\tilde{\chi}} \simeq M_r(D)$ for some division algebra $D$ and positive integer $r$ satisfying $dim(V) = r [D:\mathbb{Q}(\chi)]$, then $E_i^* (\mathbb{Q}T_x e_{\tilde{\chi}}) E_i^* \simeq M_{r'}(D)$ with $1 \le r' \le r$.  

When $V$ is thin, we have that $E_i^* T_x E_i^*$ is commutative, so its subring $E_i^* (\mathbb{Q}T_x e_{\tilde{\chi}}) E_i^*$ is commutative.   Since it is also simple, it is a field.  Being Morita equivalent to a central simple algebra over $\mathbb{Q}(\chi)$, the field $E_i^* (\mathbb{Q}T_x e_{\tilde{\chi}}) E_i^*$ must be isomorphic to $\mathbb{Q}(\chi)$.  It follows that the rational Schur index of $V$ is $1$, and therefore 
$\mathbb{Q}T_x e_{\tilde{\chi}} \simeq M_r(\mathbb{Q}(\chi))$, where $r = dim(V)$.  When $V$ is thin, $V = \sum_i E_i^*V$ and each nonzero $E_i^*V$ has dimension $1$, so $dim(V)$ is equal to the dual diameter of $V$.  
\end{proof} 

\section{ Distinguishing Order $27$ schemes with their rational Terwilliger algebras } 

In this section we will illustrate how we have used Theorem \ref{ThinSchurIndex} to determine the algebraic structure of the rational Terwilliger algebras at every vertex of asymmetric rank $3$ association schemes of order $27$, and found that these do distinguish these schemes up to combinatorial isomorphism.  As there are $374$ of these schemes, and $27$ rational Terwilliger algebras for each one, we will not include all of the details.  Instead we will illustrate the necessary techniques by presenting the most interesting and difficult cases we encountered. 

Let $\mathbb{Q}T_x$ be the rational Terwilliger algebra of an asymmetric rank $3$ association scheme.  By Theorems \ref{ThinSchurIndex} and \ref{main}, we have that the non-primary simple components of $\mathbb{Q}T_x$ are either $\mathbb{Q}(\chi)$ or $M_2(\mathbb{Q}(\chi))$, and the central fields $\mathbb{Q}(\chi)$ are the fields $\mathbb{Q}(\theta)$ where $\theta$ is an eigenvalue of the subconstituent matrix $B_1$ defined in Equation (\ref{eq5}).   So the approach is to determine these central fields and compare how many of each occur. 

\begin{example} {\rm The largest collection of schemes of order $27$ whose complex Terwilliger algebras are all isomorphic has size $23$, for these the dimensions of their Terwilliger algebras, with frequency, are $(57^{24},49^3)$.  For these, every non-primary simple component will be a $2 \times 2$ matrix ring over its center.   

The first pair we will distinguish is {\tt as27no134} and {\tt as27no288}.  In both cases the simple components of the three rational Terwilliger algebras of dimension $49$ have central fields that are extensions of degree $2$ and $8$, and the quadratic extensions are the splitting field of $x^2+x+4$.  As it is harder to distinguish extensions of degree $8$ we look first at their collections of $24$ Terwilliger algebras of dimension $57$.  We consider the distribution of degrees of central field extensions in each case, always the sum of these degrees will be $12$, and identify the specific quadratic extensions.  

\smallskip
For {\tt as27no134}, these central fields and their freqencies are: 

$\bullet$ degree $12$ extension ($17$ times); 

$\bullet$ degree $10$ extension and the splitting field of $x^2+x+4$ ($1$ time) 

$\bullet$ degree $10$ extension and the splitting field of $x^2+x+3$ ($1$ time) 

$\bullet$ degree $8$ extension and degree $4$ extension ($4$ times) 

$\bullet$ degree $8$ extension and the splitting fields of $x^2+x+4$ and $x^2+x+1$ ($1$ time) 

\smallskip
For {\tt as27no288}, this central field distribution is: 

$\bullet$ degree $12$ extension ($17$ times); 

$\bullet$ degree $10$ extension and the splitting field of $x^2+x+6$ ($1$ times) 

$\bullet$ degree $10$ extension and the splitting field of $x^2+x+3$ ($1$ time) 

$\bullet$ degree $8$ extension and degree $4$ extension ($4$ times) 

$\bullet$ degree $8$ extension and splitting fields of $x^2+x+4$ and $x^2+x+1$ ($1$ time) 

\smallskip
We can conclude from this information that the rational Terwilliger algebras for these two association schemes are distinguished by the specific quadratic extensions occurring as centers of simple components.  The first one {\tt as27no134} has the splitting field $\mathbb{Q}(\sqrt{-11})$ of $x^2+x+3$ occurring as a center of one of the simple components of its rational Terwilliger algebras, while {\tt as27no288} does not, and instead has this center replaced by the splitting field $\mathbb{Q}(\sqrt{-23})$ of $x^2+x+6$.  

The next pair we will consider is {\tt as27no186} and {\tt as27no276}, which lies in the same collection as the previous pair.  For these the degrees of central fields for their $24$ Terwilliger algebras of dimension $57$ match: each has $17$ degree $12$ extensions and $7$ that have a degree $8$ and degree $4$ extension.  When we look at their three Terwilliger algebras of dimension $49$, {\tt as27no82} has a degree $8$ extension occurring with the splitting field $\mathbb{Q}(\sqrt{-15})$ of $x^2+x+4$ for two of them, and a degree $8$ extension occurring with the splitting field of $\mathbb{Q}(\sqrt{-11})$ of $x^2+x+3$ for the other.  For {\tt as27no276} it is the opposite: two have a degree $8$ extension and $\mathbb{Q}(\sqrt{-11})$ and the other has a degree $8$ extension occurring with $\mathbb{Q}(\sqrt{-15})$.  So the frequency of the quadratic extensions occurring as centers of simple components distinguishes the two lists of rational Terwilliger algebras. 
}\end{example} 

The last example gives the most sensitive technique required to distinguish asymmetric rank $3$ association schemes of order $27$ using their rational Terwilliger algebras. 

\begin{example} {\rm The three association schemes {\tt as27no11}, {\tt as27no106}, and {\tt as27no168} have isomorphic complex Terwilliger algebras: $25$ of dimension $57$, one of dimension $41$, and one of dimension $25$.  
{\tt as27no168} is distinguished from the others by the degrees of the central field extensions in its Terwilliger algebras, since it has $15$ degree $12$ extensions occurring as centers of its Terwilliger algebras of dimension $57$ and for the other two schemes this number is $20$.  For {\tt as27no11} and {\tt as27no106}, the degrees of all central field extensions occurring among simple components of their rational Terwilliger algebras match.  When we consider the central field extensions in their Terwilliger algebras of dimension $41$, the splitting field of $x^4+2x^3+6x^2+5x+2$ occurs in both.  The other central field occurring in both algebras is also of degree $4$.  For {\tt as27no11} it is the non-normal extension $L_1$ obtained by adjoining a root of $x^4+2x^3+9x^2+8x+11$ and for {\tt as27no106} it is the extension $L_2$ obtained by adjoining a root of $x^4+2x^3+9x^2+8x+13$.  We can use GAP to check that $L_1$ contains no root of $x^4+2x^3+9x^2+8x+13$, and therefore the fields $L_1$ and $L_2$ are not $\mathbb{Q}$-isomorphic.  So these two asssociation schemes are distinguished by just one difference in the degree $4$ extensions occurring in their rational Terwilliger algebra of dimension $41$. 
}\end{example}

In terms of rational representation theory, the semisimple algebras in the last example come quite close to being isomorphic.  It seems unlikely that rational Terwilliger algebras will be enough to distinguish the 98300 non-isomorphic asymmetric rank $3$ association schemes of order $31$, but this formidable task is yet to be attempted.  

\medskip
{\footnotesize
\noindent {\bf Data availability statement:} This manuscript has no associated data. 
}

\end{document}